\newcommand{\IN}{\mathbb N}
\newcommand{\C}{\mathcal C}
\newcommand{\w}{\omega}
\newtheorem{theorem}{Theorem}
\newtheorem{proposition}{Proposition}
\newtheorem{lemma}{Lemma}
\newtheorem{question}{Question}
\theoremstyle{definition}
\newtheorem{example}{Example}
\title[A semigroup is finite iff it is chain-finite and antichain-finite]{A semigroup is finite if and only if\\it is chain-finite and antichain-finite}
\author{Iryna Banakh, Taras Banakh and Serhii Bardyla}
\address{I.~Banakh: Pidstryhach Institute for Applied Problems of Mechanics and Mathematics, National Academy of Sciences of Ukraine, Lviv, Naukova 3b, Ukraine}
\email{ibanakh@yahoo.com}  
\address{T.~Banakh: Ivan Franko National University of Lviv (Ukraine) and Jan Kochanowski University in Kielce (Poland)}
\email{t.o.banakh@gmail.com}
\address{S.~Bardyla: University of Vienna, Institute of Mathematics, Kurt G\"{o}del Research Center, Vienna (Austria)}
\email{sbardyla@yahoo.com}
\thanks{The third author was supported by the Austrian Science Fund FWF (Grant   M-2967).}
\keywords{semigroup, semilattice, chain, antichain}
\subjclass[2010]{20M10; 06F05; 05E16}
\begin{document}
\begin{abstract} A subset $A$ of a semigroup $S$ is called a {\em chain} ({\em antichain}) if $xy\in\{x,y\}$ ($xy\notin\{x,y\}$) for any (distinct) elements $x,y\in S$. A semigroup $S$ is called ({\em anti}){\em chain-finite} if $S$ contains no infinite (anti)chains. We prove that each antichain-finite semigroup $S$ is periodic and for every idempotent $e$ of $S$ the set $\sqrt[\infty]{e}=\{x\in S:\exists n\in\IN\;\;(x^n=e)\}$ is finite. This property of antichain-finite semigroups is used to prove that a semigroup is finite if and only if it is chain-finite and antichain-finite. Also we present an example of an antichain-finite semilattice that is not a union of finitely many chains.
\end{abstract}
\maketitle

\section*{Introduction}

In this paper we present a characterization of finite semigroups in terms of finite chains and antichains. These two notions are well-known in the theory of order (see e.g.  \cite[O-1.6]{Bible} or \cite{LMP}) but can also be defined for semigroups. Let us recall that a {\em semigroup} is a set endowed with an associative binary operation $S\times S\to S$, $\langle x,y\rangle\mapsto xy$. A {\em semilattice} is a commutative semigroup whose any element $x$ is an {\em idempotent} (which means that $xx=x$). Each semilattice $S$ carries a natural partial order $\le$ defined by $x\le y$ iff $xy=yx=x$.

A subset $A$ of a semigroup $S$ is defined to be
\begin{itemize}
\item a {\em chain} if $xy\in\{x,y\}$ for any elements $x,y\in A$;
\item an {\em antichain} if $xy\notin\{x,y\}$ for any distinct elements  $x,y\in A$.
\end{itemize}
The definition implies that each chain consists of idempotents.

A semigroup $S$ is defined to be ({\em anti}){\em chain-finite} if it contains no infinite (anti)chains.

Chain-finite semigroups play an important role in studying $\C$-closed and absolutely $\C$-closed semigroups, see \cite{BBm,GutikPagonRepovs2010,Stepp75,Yokoyama2013}. The notion of an antichain-finite semigroup seems to be new.

The principal result of this note is the following theorem characterizing finite semigroups.

\begin{theorem}\label{t:main} A semigroup $S$ is finite if and only if it is chain-finite and antichain-finite.
\end{theorem}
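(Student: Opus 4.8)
The ``only if'' direction is immediate, so the plan is to assume that $S$ is both chain-finite and antichain-finite and to deduce that $S$ is finite. First I would invoke the property of antichain-finite semigroups announced in the abstract: $S$ is periodic, hence every $x\in S$ has a power $x^k$ that is an idempotent, so $S=\bigcup_{e\in E(S)}\sqrt[\infty]{e}$, where $E(S)$ denotes the set of idempotents of $S$; moreover each set $\sqrt[\infty]{e}$ is finite. Consequently $S$ is finite as soon as $E(S)$ is finite, and the whole theorem reduces to the following statement about idempotents:

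\emph{Key Claim.} Every infinite set of idempotents in a semigroup contains an infinite chain or an infinite antichain.

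Granting the Key Claim and applying it to $E(S)$ would contradict chain-finiteness and antichain-finiteness of $S$ unless $E(S)$ is finite; so $E(S)$ is finite, and then $S=\bigcup_{e\in E(S)}\sqrt[\infty]{e}$ is a finite union of finite sets, hence finite.

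To prove the Key Claim I would fix a sequence $(e_n)_{n\in\w}$ of pairwise distinct idempotents and colour each pair $\{i,j\}$ with $i<j$ by the ordered pair of ``positions'' of the products $e_ie_j$ and $e_je_i$ relative to $\{e_i,e_j\}$: each of these products lies in $\{e_i\}$, in $\{e_j\}$, or in $S\setminus\{e_i,e_j\}$, so this is a colouring with nine colours. By Ramsey's theorem there is an infinite homogeneous set, and after reindexing we may assume the whole sequence $(e_n)_{n\in\w}$ is homogeneous. In the four colours for which both $e_ie_j$ and $e_je_i$ always belong to $\{e_i,e_j\}$ the set $\{e_n:n\in\w\}$ is an infinite chain; in the colour for which both products always lie outside $\{e_i,e_j\}$ it is an infinite antichain. (One might hope to extract an infinite sub-band and quote the structure theory of bands, but $E(S)$ need not be closed under the operation, so a direct Ramsey argument seems necessary.)

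The remaining four ``mixed'' colours, in which for every $i<j$ exactly one of $e_ie_j,\,e_je_i$ lies in $\{e_i,e_j\}$, are the main obstacle. Passing to the opposite semigroup if necessary, I may assume it is $e_ie_j$ that lies in $\{e_i,e_j\}$; the two subcases $e_ie_j=e_i$ and $e_ie_j=e_j$ are treated similarly, so say $e_ie_j=e_i$ for all $i<j$, while $e_je_i\notin\{e_i,e_j\}$. Using idempotency of the $e_n$ together with these identities one checks that each $e_je_i$ ($i<j$) is again an idempotent, and — more usefully — that the descending products $f_n:=e_ne_{n-1}\cdots e_1e_0$ are idempotents with $\{f_m,f_n\}$ a chain for all $m<n$; hence, if the $f_n$ are pairwise distinct, $\{f_n:n\in\w\}$ is an infinite chain, contradicting chain-finiteness. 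The crux — and the step I expect to be the hardest — is to rule out the alternative that the $f_n$ (or the analogous auxiliary idempotents in the other subcases) take only finitely many values: there one feeds the induced finite colouring of pairs back into Ramsey's theorem to obtain an infinite subset on which $e_je_i$ equals a single idempotent $h$ for all $i<j$, and then argues directly that the configuration $\{e_n:n\in\w\}\cup\{h\}$ must contain an infinite chain or an infinite antichain. Organising this case analysis so that every mixed colour is eliminated is the technical heart of the proof; the rest is routine.
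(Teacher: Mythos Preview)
Your overall plan coincides with the paper's: reduce to showing $E(S)$ is finite via periodicity and finiteness of each $\sqrt[\infty]{e}$, then apply Ramsey to an infinite sequence of idempotents and eliminate the monochromatic cases. The paper uses six colours rather than nine (lumping the four ``both products in $\{e_i,e_j\}$'' cases into one), but this is cosmetic. The substantive difference, and the gap, is in the mixed case $e_ie_j=e_i$, $e_je_i\notin\{e_i,e_j\}$ for $i<j$. Your descending products $f_n=e_ne_{n-1}\cdots e_0$ do form a chain ($f_mf_n=f_m$, $f_nf_m=f_n$), so if they are pairwise distinct you win. But when they take only finitely many values, your next step---``feed the induced finite colouring of pairs back into Ramsey to make $e_je_i$ constant''---is a non sequitur: the $f_n$ are indexed by single integers, not pairs, and nothing you have established bounds the number of values of the two-variable products $e_je_i$, so there is no finite colouring of pairs available. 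This is the missing idea, and you yourself flag it as the crux.

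The paper's device is different and sidesteps the problem. For each fixed $k$ one checks that $Z_k=\{e_ne_k:n>k\}$ is already a chain (since $(e_ne_k)(e_me_k)=e_n(e_ke_m)e_k=e_ne_k$), hence finite by chain-finiteness; pigeonhole then yields an infinite $\Omega_k$ and a single value $z_k$ with $e_ne_k=z_k$ for all $n\in\Omega_k$. Diagonalising through nested $\Omega_k$'s gives indices $k_0<k_1<\cdots$ with $k_i\in\Omega_{k_{i-1}}$, and one shows $\{z_{k_i}\}$ is again a chain, hence finite; a second pigeonhole produces a single $z$ with $z_{k_i}=z$ on an infinite set, and then a short computation forces $e_{k_j}=e_{k_j}e_{k_i}\notin\{e_{k_i},e_{k_j}\}$, a contradiction. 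So the idea you are missing is to work with the two-term products $e_ne_k$ (which for fixed $k$ already form a chain) and to iterate the chain-finiteness/pigeonhole step, rather than with the long products $f_n$. Note also that the paper does not prove your Key Claim as stated: it uses chain-finiteness of $S$ inside the argument to conclude the auxiliary chains are finite, rather than extracting an abstract infinite chain from $E(S)$ alone; the paper's argument can be reorganised to yield your Key Claim, but that is not what it does.
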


A crucial step in the proof of this theorem is the following proposition describing the (periodic) structure of antichain-finite semigroups.

A semigroup $S$ is called {\em periodic} if for every $x\in S$ there exists $n\in\IN$ such that $x^n$ is an idempotent of $X$. In this case $$S=\bigcup_{e\in E(S)}\!\!\sqrt[\infty]{e},$$
where $E(S)=\{x\in S:xx=x\}$ is the set of idempotents of $S$ and $$\sqrt[\infty]{e}=\{x\in S:\exists n\in\IN\;\;(x^n=e)\}$$
for $e\in E(S)$.

\begin{proposition}\label{p:main} Each antichain-finite semigroup $S$ is periodic and for every $e\in E(S)$ the set $\sqrt[\infty]{e}$ is finite.
\end{proposition}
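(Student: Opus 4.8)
The plan is to apply antichain-finiteness twice: once to each cyclic subsemigroup, which yields periodicity, and then, in a more delicate way, to the set $\sqrt[\infty]{e}$ itself, split according to whether $e$ behaves like a local identity.

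For periodicity, fix $x\in S$ and consider the cyclic subsemigroup $\langle x\rangle=\{x^n:n\in\IN\}$. If the powers $x,x^2,x^3,\dots$ were pairwise distinct, then for any distinct $i,j\in\IN$ we would have $x^ix^j=x^{i+j}\notin\{x^i,x^j\}$, since $i+j$ exceeds both $i$ and $j$; thus $\langle x\rangle$ would be an infinite antichain, which is impossible. Hence $x^i=x^j$ for some $i<j$, so $\langle x\rangle$ is finite and therefore contains an idempotent $x^n$. Consequently $S$ is periodic and $S=\bigcup_{e\in E(S)}\sqrt[\infty]{e}$, and it remains to prove that each $\sqrt[\infty]{e}$ is finite.

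Fix $e\in E(S)$. I would first record two elementary facts. \emph{(i)} For $x\in\sqrt[\infty]{e}$ and any $m\in\IN$ with $x^m=e$, one has $xe=x^{m+1}=ex$; in particular $xe=x\iff ex=x\iff x^{m+1}=x$, and I call such an $x$ \emph{regular}. \emph{(ii)} If $u,v\in S$ satisfy $uv=u$ (resp.\ $vu=u$) and $v\in\sqrt[\infty]{e}$, then $uv^k=u$ (resp.\ $v^ku=u$) for all $k\in\IN$ by induction, so choosing $k$ with $v^k=e$ gives $ue=u$ (resp.\ $eu=u$). From \emph{(i)} and \emph{(ii)} it follows that, for distinct $x,y\in\sqrt[\infty]{e}$, each of $xy=x$, $xy=y$, $yx=x$, $yx=y$ forces one of $x,y$ to be regular. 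Now write $\sqrt[\infty]{e}=\{e\}\cup N\cup R$, where $R$ is the set of regular elements different from $e$ and $N=\sqrt[\infty]{e}\setminus(\{e\}\cup R)$.

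I then claim that both $N$ and $R$ are antichains, which finishes the proof because an antichain-finite semigroup has no infinite antichains. For $N$: if distinct $x,y\in N$ had $xy\in\{x,y\}$ or $yx\in\{x,y\}$, then by \emph{(ii)} one of $x,y$ would be regular, contradicting $N\cap R=\emptyset$. For $R$: suppose distinct $x,y\in R$ have, say, $xy=x$; pick $m\in\IN$ with $x^m=e$, so $m\ge2$ because $x\neq e$, and then $x^{m-1}(xy)=x^my=ey=y$ (using regularity of $y$), while also $x^{m-1}(xy)=x^{m-1}x=x^m=e$, so $y=e$, a contradiction; the cases $xy=y$, $yx=x$, $yx=y$ are symmetric, cancelling down to $e$ by multiplying by a suitable power of $x$ or $y$ on the appropriate side and using regularity of $x$ or $y$. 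Hence $N$ and $R$, and therefore $\sqrt[\infty]{e}$, are finite. The step I expect to be the main obstacle is discovering the regular/non-regular dichotomy and the fact that \emph{each of the two pieces separately} must be an antichain — non-regular elements are too rigid (by \emph{(ii)}) to admit a chain-like pair among themselves, while regular elements lie in cyclic groups with identity $e$, whose cancellation rules out chain-like pairs among themselves — so that one never has to control the mixed pairs between $N$ and $R$.
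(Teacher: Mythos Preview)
Your proof is correct, and the periodicity argument matches the paper's Lemma~\ref{l:periodic} exactly. For the finiteness of $\sqrt[\infty]{e}$, however, you take a genuinely different and more elementary route than the paper.

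The paper proceeds via the $\mathcal H$-class $H_e$: it first shows $H_e\setminus\{e\}$ is an antichain (Lemma~\ref{l:He}), then establishes the absorption property $H_e\cdot\sqrt[\infty]{e}\cup\sqrt[\infty]{e}\cdot H_e\subseteq H_e$ (Lemma~\ref{l:ideal}), and finally applies the Ramsey theorem with a five-colouring to an infinite sequence in $\sqrt[\infty]{e}\setminus H_e$ to reach a contradiction (Lemma~\ref{l:P-fin}). Your decomposition $\sqrt[\infty]{e}=\{e\}\cup N\cup R$ is in fact the same partition in disguise---one checks that $R\cup\{e\}=H_e$ in a periodic semigroup---so your claim that $R$ is an antichain is exactly the paper's Lemma~\ref{l:He}. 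The real difference is your treatment of $N$: rather than invoking Ramsey, you observe directly via fact~(ii) that any chain-relation $xy\in\{x,y\}$ between elements of $\sqrt[\infty]{e}$ forces one of them to be regular, so $N$ itself is already an antichain. This bypasses Ramsey's theorem entirely and is strictly simpler than the paper's argument; the paper's five cases $c\in\{0,1,2,3\}$ are each reproving, via the detour through Lemma~\ref{l:ideal}, the single implication you isolate in~(ii). What the paper's machinery buys is a template (Ramsey plus $\mathcal H$-class structure) that it reuses in the harder Lemma~\ref{l:finite}, but for Proposition~\ref{p:main} alone your argument is cleaner.
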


Theorem~\ref{t:main} and Proposition~\ref{p:main} will be proved in the next section.

Now we present a simple example of an antichain-finite lattice which is not a union of finitely many chains.

\begin{example}\label{ex} Consider the set
$$S=\{\langle 2n-1,0\rangle:n\in\IN\}\cup\{\langle 2n,m\rangle:n,m\in\IN,\;m\le 2n\}$$endowed with the semilattice binary operation
$$
\langle x,i\rangle\cdot\langle y,j\rangle=\begin{cases}
\langle x,i\rangle&\mbox{if $x=y$ and $i=j$};\\
\langle x-1,0\rangle&\mbox{if $x=y$ and $i\ne j$};\\
\langle x,i\rangle&\mbox{if $x<y$};\\
\langle y,j\rangle&\mbox{if $y<x$}.
\end{cases}
$$It is straightforward to check that the semilattice $S$ 
has the following properties:
\begin{enumerate}
\item $S$ is antichain-finite;
\item $S$ has arbitrarily long finite antichains;
\item $S$ is not a union of finitely many chains;
\item the subsemilattice $L=\{\langle 2n-1,0\rangle:n\in\IN\}$ of $S$ is a chain;
\item $S$ admits a homomorphism $r:S\to L$ such that $r^{-1}(\langle x,0\rangle)=\{\langle y,i\rangle\in S:y\in\{x,x+1\}\}$ is finite for every element $\langle x,0\rangle\in L$.
\end{enumerate}
\end{example}

Example~\ref{ex} motivates the following question.

\begin{question} Let $S$ be an antichain-finite semilattice. Is there a finite-to-one  homomorphism $r:S\to Y$ to a semilattice $Y$ which is a finite union of chains?
\end{question}

A function $f:X\to Y$ is called {\em finite-to-one} if for every $y\in Y$ the preimage $f^{-1}(y)$ is finite.

\section*{Proofs of the main results}

In this section we prove some lemmas implying Theorem~\ref{t:main} and Proposition~\ref{p:main}. More precisely, Proposition~\ref{p:main} follows from Lemmas~\ref{l:periodic} and \ref{l:P-fin}; Theorem~\ref{t:main} follows from Lemma~\ref{l:finite}.

\begin{lemma}\label{l:periodic} Every antichain-finite semigroup $S$ is periodic.
\end{lemma}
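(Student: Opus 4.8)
The plan is to argue by contradiction: assume $S$ is an antichain-finite semigroup which is not periodic, and manufacture an infinite antichain. Non-periodicity means there is some $a\in S$ such that no power $a^n$ is an idempotent; equivalently, the cyclic subsemigroup $\langle a\rangle=\{a,a^2,a^3,\dots\}$ contains no idempotent. By the structure theory of monogenic semigroups, a finitely generated (indeed monogenic) semigroup containing no idempotent must be infinite — in fact $\langle a\rangle$ is then isomorphic to $(\IN,+)$, so the powers $a^1,a^2,a^3,\dots$ are pairwise distinct and satisfy $a^i a^j = a^{i+j}$.

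The heart of the argument is to extract from the chain of powers $\{a^n:n\in\IN\}$ an infinite \emph{antichain}. First I would note that for $i<j$ we have $a^ia^j=a^{i+j}\notin\{a^i,a^j\}$ (since all powers are distinct and $i+j>j>i$), so the pair $\{a^i,a^j\}$ already violates the chain condition — but that is not enough, because an antichain requires $xy\notin\{x,y\}$ for \emph{both} orders, and $a^ja^i=a^{i+j}$ too, so actually $\{a^i,a^j\}$ \emph{is} a $2$-element antichain. Hence the whole set $\{a^n:n\in\IN\}$ is an infinite antichain: for any two distinct powers $a^i,a^j$ with $i\ne j$, the product in either order is $a^{i+j}$, which is distinct from both $a^i$ and $a^j$. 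This directly contradicts antichain-finiteness.

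So the real content reduces to one classical fact: a monogenic semigroup with no idempotent is infinite and free, i.e. isomorphic to $(\IN,+)$. I would either cite this (it is standard — if $a^i=a^j$ for some $i<j$ then $\langle a\rangle$ is finite and contains the idempotent $a^{kr}$ where $r=j-i$ and $kr\ge i$), or prove it inline: if some two powers coincide, pick the least $i$ with $a^i=a^j$ for some $j>i$, set $r=j-i$; then the "tail" $\{a^i,a^{i+1},\dots,a^{i+r-1}\}$ is a cyclic group of order $r$, which contains an idempotent, contradicting non-periodicity of $a$. Therefore all powers of $a$ are distinct, and we are done by the previous paragraph.

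The main obstacle — really the only subtlety — is making sure the product of two distinct powers genuinely lies outside the $2$-element set $\{a^i,a^j\}$ in \emph{both} multiplication orders, so that we get a bona fide antichain rather than merely a non-chain; but since $a^ia^j=a^ja^i=a^{i+j}$ and $i+j$ exceeds both $i$ and $j$ while all exponents give distinct elements, this is immediate. No commutativity or further hypotheses on $S$ are needed beyond what is used to analyze the single monogenic subsemigroup $\langle a\rangle$.
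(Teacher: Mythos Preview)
Your proof is correct and follows essentially the same route as the paper's: both show that if the powers of an element are pairwise distinct then $\{a^n:n\in\IN\}$ is an infinite antichain (since $a^ia^j=a^{i+j}\notin\{a^i,a^j\}$), and both then use the standard fact that a monogenic semigroup with a repeated power contains an idempotent. The only cosmetic difference is packaging---you argue by global contradiction on periodicity and invoke the index/period description of $\langle a\rangle$, while the paper first forces $x^n=x^m$ and then explicitly exhibits the idempotent $x^{r+n}$ with $r+n$ a multiple of $m-n$.
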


\begin{proof} Given any element $x\in S$ we should find a natural number $n\in\IN$ such that $x^n$ is an idempotent. First we show that $x^n=x^m$ for some $n\ne m$. Assuming that $x^n\ne x^m$ for any distinct numbers $n,m$, we conclude that the set $A=\{x^n:n\in\IN\}$ is infinite and for any $n,m\in\IN$ we have $x^{n}x^m=x^{n+m}\notin\{x^n,x^m\}$, which means that $A$ is an infinite antichain in $S$. But such an antichain cannot exist as $S$ is antichain-finite. This contradiction shows that $x^n=x^{m}$ for some numbers $n<m$ and then for the number $k=m-n$ we have $x^{n+k}=x^m=x^n$. By induction we can prove that $x^{n+pk}=x^n$ for every $p\in\IN$. Choose any numbers $r,p\in\IN$ such that $r+n=pk$ and observe that $$x^{r+n}x^{r+n}=x^{r+n}x^{pk}=x^rx^{n+pk}=x^rx^n=x^{r+n},$$
which means that $x^{r+n}$ is an idempotent and hence $S$ is periodic.
\end{proof}

An element $1\in S$ is called an {\em identity} of $S$ if $x1=x=1x$ for all $x\in S$.
For a semigroup $S$ let $S^1=S\cup \{1\}$ where $1$ is an element such that $x1=x=1x$ for every $x\in S^1$. If $S$ contains an identity, then we will assume that $1$ is the identity of $S$ and hence $S^1=S$.

For a set $A\subseteq S$ and element $x\in S$ we put $$xA=\{xa:a\in A\}\quad\mbox{and}\quad Ax=\{ax:a\in A\}.$$

For any element $x$ of a semigroup $S$, the set
$$H_x=\{y\in S:yS^1=xS^1\;\wedge\;S^1y=S^1x\}$$is called the {\em $\mathcal H$-class} of $x$. By Lemma I.7.9 \cite{PR}, for every idempotent $e$ its $\mathcal H$-class $H_e$ coincides with the maximal subgroup of $S$ that contains the idempotent $e$.

\begin{lemma}\label{l:He} If a semigroup $S$ is antichain-finite, then for every idempotent $e$ of $S$ its $\mathcal H$-class $H_e$ is finite.
\end{lemma}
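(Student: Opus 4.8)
The plan is to exploit the group structure of $H_e$. By Lemma I.7.9 of \cite{PR} recalled above, $H_e$ is the largest subgroup of $S$ having $e$ as its identity; in particular $(H_e,\cdot)$ is a group, closed under the multiplication of $S$. So I would first argue that $A:=H_e\setminus\{e\}$ is an antichain in $S$. Indeed, take any distinct $x,y\in A$. Since $H_e$ is a subgroup, $xy\in H_e$, so one may cancel in the group $H_e$: if $xy=x$, then multiplying on the left by the inverse of $x$ (taken in $H_e$) yields $y=e$, contradicting $y\in A$; symmetrically, $xy=y$ forces $x=e$, again a contradiction. Hence $xy\notin\{x,y\}$, which is exactly the defining condition of an antichain.

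Once this is established, the conclusion is immediate: as $S$ is antichain-finite it contains no infinite antichain, so $A=H_e\setminus\{e\}$ is finite, and therefore $H_e=A\cup\{e\}$ is finite as well.

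I do not expect a genuine obstacle here; the only point requiring a little care is the appeal to cancellation, which is legitimate precisely because $H_e$ — and not merely $H_e\setminus\{e\}$ — is a group sitting inside $S$, so that the product $xy$ computed in $S$ agrees with the one computed in $H_e$. If one preferred not to invoke Lemma I.7.9 explicitly, the same cancellation could be extracted from first principles: for an idempotent $e$ every $x\in H_e$ satisfies $xS^1=eS^1$ and $S^1x=S^1e$, which produces elements $u,v$ with $xu=e=vx$, enough to run the argument above. Either way, the heart of the matter is the elementary observation that in a group the only solutions of $xy\in\{x,y\}$ are those involving the identity.
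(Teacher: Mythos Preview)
Your proof is correct and follows essentially the same approach as the paper: the paper also observes that $H_e\setminus\{e\}$ is an antichain, justifying this by the injectivity of left and right shifts in the group $H_e$, which is exactly the cancellation argument you spell out. The only difference is that you make the cancellation explicit, whereas the paper compresses it into a single remark.
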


\begin{proof} Observe that the set $H_e\setminus\{e\}$ is antichain (this follows from the fact that the left and right shifts in the group $H_e$ are injective). Since $S$ is antichain-finite, the antichain $H_e\setminus\{e\}$ is finite and so is the set $H_e$.
\end{proof}

\begin{lemma}\label{l:ideal} If a semigroup $S$ is antichain-finite, then for every idempotent $e$ in $S$ we have $(H_e\cdot\sqrt[\infty]{e})\cup(\sqrt[\infty]{e}\cdot H_e)\subseteq H_e$.
\end{lemma}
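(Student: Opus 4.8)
The plan is a direct verification using only two facts: that $H_e$ is a group with identity element $e$ (as recalled before Lemma~\ref{l:He}), and the defining property of $\sqrt[\infty]{e}$. Fix $e\in E(S)$, $h\in H_e$ and $x\in\sqrt[\infty]{e}$, and pick $n\in\IN$ with $x^n=e$; replacing $n$ by $2n$ we may assume $n\ge 2$ (so that $x^{n-1}\in S$), the case $x=e$ being trivial since then $hx=h=xh\in H_e$. At the outset I would record: (i) $he=eh=h$, and $h$ has an inverse $h^{-1}\in H_e$ with $hh^{-1}=h^{-1}h=e$, because $H_e$ is a group with neutral element $e$; and (ii) $ex=xe=x^{n+1}$, because $x^n=e$.

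To show $hx\in H_e$ I would check the two membership conditions from the definition of the $\mathcal H$-class. First, $e$ acts as a two-sided identity for $hx$: indeed $e\cdot hx=(eh)x=hx$, while $hx\cdot e=h(xe)=h(x^nx)=(hx^n)x=(he)x=hx$; this gives at once $hxS^1\subseteq eS^1$ and $S^1hx\subseteq S^1e$. Second, setting $u=x^{n-1}h^{-1}\in S$ one computes $hx\cdot u=h x^n h^{-1}=heh^{-1}=e$ and $u\cdot hx=x^{n-1}(h^{-1}h)x=x^{n-1}ex=x^{n-1}x^{n+1}=x^{2n}=e$, so $e\in hxS^1\cap S^1hx$ and therefore $eS^1\subseteq hxS^1$ and $S^1e\subseteq S^1hx$. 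Combining, $hxS^1=eS^1$ and $S^1hx=S^1e$, i.e. $hx\in H_e$. The inclusion $\sqrt[\infty]{e}\cdot H_e\subseteq H_e$ follows by the mirror-image computation: $e$ is a two-sided identity for $xh$ (here $(xh)e=x(he)=xh$ and $e(xh)=(ex)h=x^{n+1}h=(xx^n)h=(xe)h=xh$), and $h^{-1}x^{n-1}$ is a two-sided inverse of $xh$ with product $e$, whence $xh\in H_e$. This gives $(H_e\cdot\sqrt[\infty]{e})\cup(\sqrt[\infty]{e}\cdot H_e)\subseteq H_e$.

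I do not expect any genuine obstacle here: once the identities (i) and (ii) are in place, the rest is a short manipulation with Green's $\mathcal H$-relation. It is perhaps worth remarking that the statement in fact holds for an arbitrary semigroup $S$ — the argument uses neither antichain-finiteness nor the earlier lemmas — so the hypothesis is included only for uniformity with the surrounding results.
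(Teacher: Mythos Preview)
Your argument is correct and follows essentially the same route as the paper: both verify the defining equalities for $\mathcal H$-class membership via the identity $xe=ex=x^{n+1}$, the only cosmetic difference being that the paper first shows $x^{n+1}\in H_e$ and then uses group closure ($hx=h(ex)=hx^{n+1}\in H_eH_e=H_e$, and symmetrically for $xh$) rather than treating $hx$ directly with an explicit inverse $x^{n-1}h^{-1}$. Your closing remark is also on point: neither proof invokes antichain-finiteness, so the hypothesis is indeed superfluous for this lemma.
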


\begin{proof} Given any elements $x\in \sqrt[\infty]{e}$ and $y\in H_e$, we have to show that $xy\in H_e$ and $yx\in H_e$. Since $x\in\sqrt[\infty]{e}$, there exists a number $n\in\IN$ such that $x^n=e$. Then $x^{n+1}S^1=exS^1\subseteq eS^1$ and $eS^1=x^{2n}S^1\subseteq x^{n+1}S^1$, and hence $eS^1=x^{n+1}S$. By analogy we can prove that $S^1e=S^1x^{n+1}$. Therefore, $x^{n+1}\in H_e$.

Then $xy=x(ey)=(xe)y=(xx^n)y=x^{n+1}y\in H_e$ and
$yx=(ye)x=y(ex)=yx^{n+1}\in H_e$.
\end{proof}

\begin{lemma}\label{l:P-fin} If a semigroup $S$ is antichain-finite, then for every idempotent $e\in E(S)$ the set $\sqrt[\infty]{e}$ is finite.
\end{lemma}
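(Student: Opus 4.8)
The plan is to argue by contradiction: assuming $P:=\sqrt[\infty]{e}$ is infinite for some $e\in E(S)$, I will produce an infinite antichain in $S$. The only inputs needed are Lemma~\ref{l:ideal} and the fact, recalled before Lemma~\ref{l:He}, that $H_e$ is a group with identity $e$, so that $e$ is the unique idempotent of $H_e$.

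First I record two elementary facts. (i) For $x\in P$ we have $xe=e$ if and only if $ex=e$: if $xe=e$, then $ex\in H_e$ by Lemma~\ref{l:ideal}, and $(ex)(ex)=(e(xe))x=(ee)x=ex$, so $ex$ is an idempotent of the group $H_e$ and hence $ex=e$; the converse is symmetric. Accordingly, put $P_0=\{x\in P:xe=e\}=\{x\in P:ex=e\}$. (ii) For $x,y\in P$, if $xy=x$, then iterating this equality (multiplying on the left by powers of $x$, and on the right by powers of $y$) gives $x^ky=x^k$ and $xy^k=x$ for all $k\in\IN$; choosing $n,m\in\IN$ with $x^n=e$ and $y^m=e$ yields $ey=e$ and $xe=x$. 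Symmetrically, $xy=y$ implies $ey=y$ and $xe=e$.

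The main point is then that both $P\setminus P_0$ and $P_0\setminus\{e\}$ are antichains. For distinct $x,y\in P\setminus P_0$: the equality $xy=x$ would give $ey=e$ by~(ii), hence $y\in P_0$ by~(i), a contradiction; and $xy=y$ would give $xe=e$ by~(ii), hence $x\in P_0$, again impossible; so $xy\notin\{x,y\}$. For distinct $x,y\in P_0\setminus\{e\}$: the equality $xy=x$ would give $xe=x$ by~(ii), which together with $xe=e$ forces $x=e$, a contradiction; and $xy=y$ would give $ey=y$ by~(ii), which together with $ey=e$ (valid by~(i)) forces $y=e$, again impossible; so $xy\notin\{x,y\}$. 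Since $P=P_0\cup(P\setminus P_0)$ is infinite, one of the antichains $P_0\setminus\{e\}$, $P\setminus P_0$ is infinite, contradicting antichain-finiteness; hence $\sqrt[\infty]{e}$ is finite.

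The genuinely useful idea is the decomposition $P=P_0\cup(P\setminus P_0)$, after which the two pieces are handled by nearly identical computations. The only technical care needed is in fact~(ii): one must iterate $xy=x$ in \emph{both} directions, since the consequence $ey=e$ is what rules out pairs inside $P\setminus P_0$, whereas the consequence $xe=x$ is what rules out pairs inside $P_0\setminus\{e\}$ (and dually for $xy=y$). I do not anticipate any other obstacle.
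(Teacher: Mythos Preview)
Your argument is correct and genuinely different from the paper's. The paper assumes $\sqrt[\infty]{e}\setminus H_e$ is infinite (using Lemma~\ref{l:He}), picks a sequence of distinct elements, and applies the Ramsey Theorem to a $5$-coloring of pairs according to the four possible ways $x_nx_m\in\{x_n,x_m\}$ can happen, plus the antichain case; each monochromatic color leads to a contradiction via Lemma~\ref{l:ideal}. You instead prove directly that $\sqrt[\infty]{e}$ decomposes as $\{e\}\cup(P_0\setminus\{e\})\cup(P\setminus P_0)$ with both nontrivial pieces antichains, so no Ramsey-type argument is needed and Lemma~\ref{l:He} is never invoked. Your approach is strictly more elementary and in fact yields a sharper conclusion: $\sqrt[\infty]{e}$ is always the union of a singleton and two antichains, so in an antichain-finite semigroup $|\sqrt[\infty]{e}|$ is bounded by (twice the maximal antichain length) $+1$. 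The paper's Ramsey route gives no such bound, though its coloring template is reused in the proof of Lemma~\ref{l:finite}, where your decomposition idea does not obviously apply.
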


\begin{proof} By Lemma~\ref{l:He}, the $\mathcal H$-class $H_e$ is finite. Assuming that $\sqrt[\infty]{e}$ is infinite, we can choose a sequence $(x_n)_{n\in\w}$ of pairwise distinct points of the infinite set $\sqrt[\infty]{e}\setminus H_e$.

Let $P=\{\langle n,m\rangle\in\w\times\w:n<m\}$ and $\chi:P\to 5=\{0,1,2,3,4\}$ be the function defined by
$$\chi(n,m)=\begin{cases}0&\mbox{if $x_nx_m=x_n$};\\
1&\mbox{if $x_mx_n=x_n$};\\
2&\mbox{if $x_nx_m=x_m$};\\
3&\mbox{if $x_mx_n=x_m$};\\
4&\mbox{otherwise}.
\end{cases}
$$By the  Ramsey Theorem 5 \cite[p.16]{Ramsey}, there exists an infinite subset $\Omega\subseteq\w$ such that $\chi[P\cap(\Omega\times\Omega)]=\{c\}$ for some $c\in\{0,1,2,3,4\}$.

If $c=0$, then $x_nx_m=x_n$ for any numbers $n<m$ in $\Omega$. Fix any two numbers $n<m$ in $\Omega$. By induction we can prove that $x_nx_m^p=x_n$ for every $p\in\IN$. Since $x_m\in \sqrt[\infty]{e}$, there exists $p\in\IN$ such that $x_m^p=e$. Then $x_n=x_nx_m^p=x_ne\in H_e$ by Lemma~\ref{l:ideal}. But this contradicts the choice of $x_n$.

By analogy we can derive a contradiction in cases $c\in\{1,2,3\}$.

If $c=4$, then the set $A=\{x_n\}_{n\in\Omega}$ is an infinite antichain in $S$, which is not possible as the semigroup $S$ is antichain-finite.

Therefore, in all five cases we obtain a contradiction, which implies that the set $\sqrt[\infty]{e}$ is finite.
\end{proof}

Our final lemma implies the non-trivial ``if'' part of Theorem~\ref{t:main}.

\begin{lemma}\label{l:finite} A semigroup $S$ is finite if it is chain-finite and antichain-finite.
\end{lemma}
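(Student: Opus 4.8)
The plan is to combine Proposition~\ref{p:main} with a Ramsey argument applied to the idempotents of $S$. First I would reduce the statement to the finiteness of $E(S)$. By Proposition~\ref{p:main} the semigroup $S$ is periodic, so $S=\bigcup_{e\in E(S)}\sqrt[\infty]{e}$, and each set $\sqrt[\infty]{e}$ is finite. These sets are pairwise disjoint: if $x^n=e\in E(S)$ and $x^m=f\in E(S)$, then $e=e^m=x^{nm}=f^n=f$. Hence $|S|=\sum_{e\in E(S)}|\sqrt[\infty]{e}|$ is a sum of positive integers indexed by $E(S)$, and so $S$ is finite if and only if $E(S)$ is finite. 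It therefore remains to prove that a chain-finite antichain-finite semigroup has only finitely many idempotents.

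Suppose to the contrary that $E(S)$ is infinite and fix a sequence $(e_n)_{n\in\w}$ of pairwise distinct idempotents. For $n<m$ call the pair $\{e_n,e_m\}$ a \emph{chain pair} if $e_ne_m,e_me_n\in\{e_n,e_m\}$, an \emph{antichain pair} if $e_ne_m,e_me_n\notin\{e_n,e_m\}$, and a \emph{mixed pair} otherwise. By the Ramsey Theorem there is an infinite set $\Omega\subseteq\w$ all of whose pairs have the same type. If all pairs from $\Omega$ are chain pairs, then $\{e_n:n\in\Omega\}$ is an infinite chain; if all are antichain pairs, then $\{e_n:n\in\Omega\}$ is an infinite antichain; both contradict our hypotheses. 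So all pairs from $\Omega$ are mixed. A further finite coloring of the pairs from $\Omega$ (recording which of $e_ne_m,e_me_n$ lies in $\{e_n,e_m\}$ and which element of the pair it equals), one more application of the Ramsey Theorem, and a possible passage to the opposite semigroup $S^{\mathrm{op}}$ (which is again chain-finite and antichain-finite with the same idempotents) allow me to assume that for all $n<m$ in some infinite set $\Omega'\subseteq\Omega$ one of the following holds: \textbf{(a)} $e_ne_m=e_n$ and $e_me_n\notin\{e_n,e_m\}$, or \textbf{(b)} $e_ne_m=e_m$ and $e_me_n\notin\{e_n,e_m\}$.

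I would then analyze case~(a); case~(b) is treated analogously, with the Green relations $\mathcal L$ and $\mathcal R$ exchanging their roles. Put $g_{nm}:=e_me_n$ for $n<m$ in $\Omega'$. Using $e_ne_m=e_n$ one checks that $g_{nm}$ is an idempotent with $e_ng_{nm}=e_n$, $g_{nm}e_n=g_{nm}$ and $e_mg_{nm}=g_{nm}e_m=g_{nm}$; thus $g_{nm}$ is $\mathcal L$-equivalent to $e_n$ and $g_{nm}\le e_m$ in the natural partial order of idempotents. Since any two idempotents $f,f'$ of one $\mathcal L$-class satisfy $ff'=f$ and $f'f=f'$, the idempotents of an $\mathcal L$-class form a chain; in particular the set $\{e_n\}\cup\{g_{nm}:m>n\}$ is a chain for each fixed $n\in\Omega'$, hence finite by chain-finiteness. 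Therefore, for a fixed $n$ the elements $g_{nm}$ ($m>n$) take only finitely many values, and after passing to an infinite subsequence I may assume that $g_{nm}$ equals a single idempotent $g$ for infinitely many $m$; for all these $m$ we have $g\le e_m$ and $g\ne e_m$, while $g$ is still $\mathcal L$-equivalent to $e_n$.

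The remaining --- and hardest --- step is to show that this configuration is impossible: an idempotent $g$ lying strictly below infinitely many idempotents $e_m$ which are pairwise incomparable in the natural order and yet do not form an antichain. Here I would exploit the finiteness of the $\mathcal H$-class $H_g$ (Lemma~\ref{l:He}) and of $\sqrt[\infty]{g}$ (Lemma~\ref{l:P-fin}) together with the absorption $H_g\cdot\sqrt[\infty]{g}\cup\sqrt[\infty]{g}\cdot H_g\subseteq H_g$ (Lemma~\ref{l:ideal}), and iterate the stabilization step inside the infinite set $\{e_m\}$ --- each iteration produces a new idempotent lying in a strictly larger $\mathcal L$-class, so the idempotents obtained this way are pairwise distinct and their mutual products become increasingly constrained, forcing in the end either an infinite chain or an infinite antichain in $S$. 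This concluding argument is the main obstacle; everything preceding it is routine Ramsey theory and elementary computations with idempotents.
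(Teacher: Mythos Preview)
Your reduction to the finiteness of $E(S)$ and the Ramsey setup match the paper's proof exactly (the paper uses a single $6$-coloring where you use two Ramsey steps and the passage to $S^{\mathrm{op}}$, but the outcome is the same handful of ``mixed'' cases). Your observation that for fixed $n$ the set $\{e_me_n:m>n,\ m\in\Omega'\}$ is a chain also agrees with the paper's argument, where this set is written $Z_k=\{e_ne_k:k<n\in\Omega\}$.

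The gap is in the last paragraph, which you yourself flag as the ``main obstacle''. Your proposed tools for closing it --- the finiteness of $H_g$ and of $\sqrt[\infty]{g}$, the absorption property of Lemma~\ref{l:ideal}, and an iteration producing idempotents in ``strictly larger $\mathcal L$-classes'' --- are not what is needed, and it is not clear they lead anywhere. The paper's completion is simpler and purely combinatorial: instead of stabilizing for a single $n$, one performs a diagonal extraction so that for \emph{every} index $k$ in a cofinal sequence $(k_i)_{i\in\w}$ the chain $Z_{k_i}$ takes a constant value $z_{k_i}$ on all later indices. A short computation then shows that $\{z_{k_i}:i\in\w\}$ is again a chain, hence finite by chain-finiteness, so some value $z$ occurs for infinitely many $i$. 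Picking $i<j$ with $z_{k_i}=z_{k_j}=z$ and unwinding the definitions gives $e_{k_j}=e_{k_j}z_{k_j}=e_{k_j}z_{k_i}=e_{k_j}(e_{k_j}e_{k_i})=e_{k_j}e_{k_i}$, contradicting the mixed-pair hypothesis $e_{k_j}e_{k_i}\notin\{e_{k_i},e_{k_j}\}$. No appeal to $H_e$, $\sqrt[\infty]{e}$, or Lemma~\ref{l:ideal} is required at this stage; the whole endgame stays inside the set of idempotents and uses only chain-finiteness a second time.
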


\begin{proof} Assume that $S$ is both chain-finite and antichain-finite. By Lemma~\ref{l:periodic}, the semigroup $S$ is periodic and hence $S=\bigcup_{e\in E(S)}\!\!\sqrt[\infty]{e}$. By Lemma~\ref{l:P-fin}, for every idempotent $e\in E(S)$ the set $\sqrt[\infty]{e}$ is finite. Now it suffices to prove that the set $E(S)$ is finite.

To derive a contradiction, assume that $E(S)$ is infinite and choose a sequence of pairwise distinct idempotents $(e_n)_{n\in\w}$ in $S$. Let $P=\{\langle n,m\rangle\in\w\times \w:n<m\}$ and $\chi:P\to \{0,1,2,3,4,5\}$ be the function defined by the formula
$$\chi(n,m)=\begin{cases}0&\mbox{if $e_ne_m\in\{e_n,e_m\}$ and $e_me_n\in\{e_n,e_m\}$};\\
1&\mbox{if $e_ne_m=e_n$ and $e_me_n\notin\{e_n,e_m\}$};\\
2&\mbox{if $e_ne_m=e_m$ and $e_me_n\notin\{e_n,e_m\}$};\\
3&\mbox{if $e_ne_m\notin\{e_n,e_m\}$ and $e_me_n=e_n$};\\
4&\mbox{if $e_ne_m\notin\{e_n,e_m\}$ and $e_me_n=e_m$};\\
5&\mbox{if $e_ne_m\notin\{e_n,e_m\}$ and $e_me_n\notin\{e_n,e_m\}$}.
\end{cases}
$$

The Ramsey Theorem 5 \cite[p.16]{Ramsey} yields an infinite subset $\Omega\subseteq \w$ such that $\chi[P\cap(\Omega\times\Omega)]=\{c\}$ for some $c\in\{0,1,2,3,4,5\}$.

Depending on the value of $c$, we shall consider six cases.
\smallskip

If $c=0$ (resp. $c=5$), then $\{e_n\}_{n\in\w}$ is an infinite (anti)chain in $S$, which is forbidden by our assumption.
\smallskip

Next, assume that $c=1$. Then $e_ne_m=e_n$ and $e_me_n\notin\{e_n,e_m\}$ for any numbers $n<m$ in $\Omega$. 
For any number $k\in\Omega$, consider the set $Z_k=\{e_ne_k:k<n\in\Omega\}$. Observe that for any $e_ne_k,e_me_k\in Z_k$ we have
$$(e_ne_k)(e_me_k)=e_n(e_ke_m)e_k=e_ne_ke_k=e_ne_k,$$
which means that $Z_k$ is a chain. Since $S$ is chain-finite, the chain
$Z_k$ is finite.

By induction we can construct a sequence of points $(z_k)_{k\in\w}\in\prod_{k\in\w}Z_k$ and a decreasing sequence of infinite sets $(\Omega_k)_{k\in\w}$ such that $\Omega_0\subseteq \Omega$ and for every $k\in\w$ and $n\in\Omega_k$ we have $e_ne_k=z_k$ and $n>k$. Choose an increasing sequence of numbers $(k_i)_{i\in\w}$ such that $k_0\in\Omega_0$ and $k_i\in\Omega_{k_{i-1}}$ for every $i\in\IN$.
We claim that the set $Z=\{z_{k_i}:i\in\w\}$ is a chain. Take any numbers $i,j\in\w$ and choose any number   $n\in\Omega_{k_i}\cap \Omega_{k_j}$.

If $i\le j$, then
$$z_{k_i}z_{k_j}=(e_ne_{k_i})(e_ne_{k_j})=e_n(e_{k_i}e_n)e_{k_j}=e_ne_{k_i}e_{k_j}=e_ne_{k_i}=z_{k_i}.$$

If $i>j$, then $k_i\in \Omega_{k_{i-1}}\subseteq \Omega_{k_j}$ and hence
$$z_{k_i}z_{k_j}=(e_ne_{k_i})(e_ne_{k_j})=e_n(e_{k_i}e_n)e_{k_j}=e_n(e_{k_i}e_{k_j})=e_nz_{k_j}=e_n(e_ne_{k_j})=e_ne_{k_j}=z_{k_j}.$$
In both cases we obtain that $z_{k_i}z_{k_j}\in\{z_{k_i},z_{k_j}\}$, which means that the set $Z=\{z_{k_i}:i\in\w\}$ is a chain. Since $S$ is chain-finite, the set $Z$ is finite.
Consequently, there exists $z\in Z$ such that the set $\Lambda=\{i\in\w:z_{k_i}=z\}$ is infinite. Choose any numbers $i<j$ in the set $\Lambda$ and then choose any number $n\in\Omega_{k_j}\subseteq\Omega_{k_i}$. Observe that $k_j\in \Omega_{k_{j-1}}\subseteq \Omega_{k_i}$ and hence $e_{k_j}e_{k_i}=z_{k_i}=z$. Then
\begin{multline*}
e_{k_j}=e_{k_j}e_{k_j}=(e_{k_j}e_n)e_{k_j}=e_{k_j}(e_ne_{k_j})=e_{k_j}z_{k_j}=e_{k_j}z=\\
=e_{k_j}z_{k_i}=e_{k_j}(e_{k_j}e_{k_i})=e_{k_j}e_{k_i}\notin \{e_{k_j},e_{k_j}\}
\end{multline*} as $c=1$.

By analogy we can prove that the assumption $c\in\{2,3,4\}$ also leads to a contradiction.
\end{proof}

\end{document}